\newtheorem{theo}{Theorem}
\newtheorem{lem}[theo]{Lemma}
\newtheorem{rem}[theo]{Remark}
\newtheorem{coro}[theo]{Corollary}
\newtheorem{prop}[theo]{Proposition}
\title{Multivariate arithmetical functions and  vector calculus}
\author{Yusuke Fujisawa}
\begin{document}

\maketitle

\begin{center}
SSS Shigakoen \\
Tashiro building 402, 1-11-27, Ikeshita, \\ 
Chikusa-ku, Nagoya 451-0015, Japan \\
Mail: fujisawa.gifu@gmail.com 
\end{center}

\section{Introduction}

Let $d$ be a positive integer,  
$\mathbb{N}$ the set of positive integers, and 
$R$ a commutative ring with  unit.
We call a map $f:\mathbb{N}^d \rightarrow R$ an arithmetical function of $d$ variables. 
Arithmetical functions of several variables were studied by several authors. 
(See, e.g., \cite{alkan}, \cite{liskovets}, \cite{schwab}.) 
The motivation of this study is to consider vector calculus of arithmetical functions of $d$ variables. 
First, we shall show analogues of integral theorems after we define  analogues of exterior differential operators and boundary maps. 
Next,  an analogue of Poincar\'{e}'s lemma will be proved.  
Our theorem provides  necessary and sufficient conditions that some simultaneous difference equations have solutions.

Vector calculus is a strong tool in physics and 
discrete vector calculus  is developing in recent years. 
(See \cite{hefferlin},  \cite{robidoux}, \cite{schwalm},etc.) 
Also, discrete differential geometry has been established. 
The author studied under the influence of the theory of discrete vector calculus.
Our purpose is to construct vector calculus of arithmetical functions of $d$ variables. 
While  our investigation are essentially contained in \cite{robidoux}
 when $d=3$,  the results of this paper are partial generalization of results of discrete vector calculus.

Let $G$ be an abelian group and take distinct elements  $e_1, \cdots, e_d  \in G$. 
We assume that  subset $S \subset G$ has the following property;
\begin{align}\label{daiji}
a \in S \Longrightarrow a+e_i \in S.  
\end{align}
Moreover, let  $\mathcal{A}(S)$be the set of maps from $S$ to $R$, 
that is, 
\begin{align*}
\mathcal{A}(S) = \{ f: S \rightarrow R \}.
\end{align*}  
We fix  $G$, $ e_1, \cdots, e_d$,  and $S$.

We define $R$ moldules $\Omega^q(S)$ as follows. 
First, let $M$ be a free $\mathcal{A}(S)$ module of rank $d$ which is generated by formal elements $dx_1, \cdots, dx_d$ and 
consider the exterior algebra of $M$, that is, 
\begin{align*}
\bigwedge(M)=\bigoplus_{q=0}^{d} \bigwedge^q (M).
\end{align*}
 
Put $\Omega^q(S)= \bigwedge^q(M)$. Since 
$\{ dx_{i_1} \wedge dx_{i_2} \wedge \cdots \wedge dx_{i_q} |
1 \leq i_1< \cdots < i_q \leq d
 \}$ is a basis of $\Omega^q(S)$, 
 elements of $\Omega^q(S)$ are written in the form 
\begin{align*}
\sum_{1 \leq i_1 < \cdots < i_q \leq d}f_{i_1  \cdots  i_q }
\; dx_{i_1} \wedge \cdots \wedge dx_{i_q} 
\end{align*}
where $f_{i_1  \cdots  i_q } \in \mathcal{A}(S)$.
Thus, $\Omega^0(S)$ can be identified with $\mathcal{A}(S)$. 
Both $\Omega^q(S)$ and $\mathcal{A}(S)^{\binom{d}{q}}$ are $R$ modules, and there is 
a bijective correspondence between them.

Also, we construct   $R$ modules $N_q(S)$  which can be regarded as a set of geometric objects. 
Let $N_0(S)$ be the set of formal finite sums 
\begin{align*}
\sum r_{a} [a], \;\;\; (r_{a} \in R,  a \in S).
\end{align*}
 For $1 \leq q \leq d$, we define $N_q(S)$ as follows. 
Consider a formal element 
$A=[a:e_{i_1}, \cdots, e_{i_q}]$
where 
$a \in S$ and
$e_{i_1}, \cdots, e_{i_q} \in \{ e_1, \cdots, e_d\}, (i_1 < \cdots < i_q)$ are relatively distinct. 
$N_q(S)$ is the set of formal finite sums 
\begin{align*}
\sum r_{A} A, \;\;\; (r_A\in R, A=[a:e_{i_1}, \cdots, e_{i_q}]).
\end{align*}
If $d=3$ and $G=\mathbb{Z}^3$,  elements of $N_0(S)$, 
$N_1(S)$, $N_2(S)$, and 
$N_3(S)$ 
are formal sums of points, segments, parallelogram, and parallelepiped respectively.  

In Section 2, we define 
$R$ homomorphisms $D_q:\Omega^{q-1}(S) \rightarrow \Omega^{q}(S)$, 
$D'_{q}:N_{q}(S) \rightarrow N_{q-1}(S)$, $(1 \leq q \leq d)$, 
and 
$R$ bilinear maps $B_{q}:\Omega^q(S) \times N_q(S) \rightarrow R$
 $(0 \leq q \leq d)$. 
We sometimes write them   $D$, $D'$, and $B$ when no confusion can arise. 
In this paper,  $D$, $D'$, and  $B$ are said to be exterior differential operators, 
boundary maps, and integrals respectively. 
The following theorem is proved in Section 2.

\begin{theo}\label{stokes}
We have 
\begin{align*}
D_{q+1}D_q \equiv 0 \; (q=1, \cdots, d-1), \\
D'_{q}D'_{q+1} \equiv 0 \; (q=1, \cdots, d-1),
\end{align*}
and 
\begin{align*}
B_q(D_q \omega_{q-1}, \; A_{q})=B_{q-1}(\omega_{q-1}, \; D'_{q}A_{q})
, (q=1, \cdots, d)
\end{align*}
for $\omega_{q-1} \in \Omega^{q-1}(S)$ and $A_q \in N_q(S)$.
\end{theo}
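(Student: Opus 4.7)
The plan is to reduce each of the three statements to a computation on generators, using $R$-bilinearity (and $\mathcal{A}(S)$-linearity on the forms side) throughout.

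For the identity $D_{q+1}D_q \equiv 0$, I would first verify it on $0$-forms. Writing $\Delta_i f(a) := f(a+e_i) - f(a)$, the natural definition makes $D_1 f = \sum_i \Delta_i f \, dx_i$, and extending by the wedge-Leibniz rule gives $D_2 D_1 f = \sum_{i,j} \Delta_j \Delta_i f \, dx_j \wedge dx_i$. Because the shift operators $\Delta_i$ and $\Delta_j$ commute (this just uses that $G$ is abelian), while $dx_j \wedge dx_i = -dx_i \wedge dx_j$, the sum is antisymmetric in $(i,j)$ and hence vanishes. For general $q$, I would show $D_q(f \, dx_{i_1} \wedge \cdots \wedge dx_{i_{q-1}}) = (D_1 f) \wedge dx_{i_1} \wedge \cdots \wedge dx_{i_{q-1}}$, so that $D_{q+1} D_q$ on a typical summand reduces to the $0$-form case wedged with a fixed monomial, giving $0$.

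For $D'_{q}D'_{q+1} \equiv 0$, I expect the boundary of a generator $A = [a; e_{i_1}, \ldots, e_{i_{q+1}}]$ to be an alternating sum of $2(q{+}1)$ codimension-one ``faces,'' of the form $\pm [a; \ldots \widehat{e_{i_j}} \ldots]$ and $\pm [a+e_{i_j}; \ldots \widehat{e_{i_j}} \ldots]$. Applying $D'_q$ again produces codimension-two faces, each indexed by an ordered pair of omitted directions $(e_{i_j}, e_{i_k})$ together with the choice of which corner (the $0$ or $1$ end in each omitted direction) is the basepoint. Exactly as in the simplicial or cubical proof of $\partial^2 = 0$, each such codimension-two face arises in two ways depending on the order in which its two directions are removed, and the alternating sign convention forces these two contributions to cancel pairwise.

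The main content — and the step I expect to be hardest — is the Stokes-type adjointness $B_q(D_q \omega_{q-1}, A_q) = B_{q-1}(\omega_{q-1}, D'_q A_q)$. By bilinearity, reduce to $\omega_{q-1} = f \, dx_{i_1} \wedge \cdots \wedge dx_{i_{q-1}}$ and $A_q = [a; e_{j_1}, \ldots, e_{j_q}]$, with $B$ the natural pairing that evaluates $f_{k_1 \cdots k_q} \, dx_{k_1} \wedge \cdots \wedge dx_{k_q}$ against $[a; e_{k_1}, \ldots, e_{k_q}]$ at the point $a$ (and is zero when the index tuples disagree). Both sides vanish unless $\{i_1, \ldots, i_{q-1}\} \subset \{j_1, \ldots, j_q\}$, differing by exactly one index $e_{j_\ell}$. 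In that case, the left side equals $\pm \Delta_{j_\ell} f(a)$ coming from $D_q \omega_{q-1}$ picking out the missing direction, while the right side equals $\pm( f(a+e_{j_\ell}) - f(a))$ coming from the two faces of $D'_q A_q$ on which $\omega_{q-1}$ does not vanish. The entire proof then comes down to verifying that the signs agree; I would fix the sign conventions for $D_q$ and $D'_q$ (the position of the $(-1)^{j+1}$ in the boundary and the rule for inserting the new $dx_i$ in the exterior derivative) so that this matching is automatic, and then read off the equality from a single indexed calculation. The sign bookkeeping is the one place where a careless choice would break the theorem, so that is where I would take the most care.
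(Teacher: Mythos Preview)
Your proposal is correct and follows essentially the same route as the paper. For $D_{q+1}D_q=0$ the paper simply cites the standard differential-forms argument you describe; for $D'_qD'_{q+1}=0$ the paper carries out exactly the pairwise cancellation of codimension-two faces you outline, organized via an explicit sign function $s_I(j)=(-1)^{\#\{i\in I:i<j\}}$ and the identity $s_{I\setminus\{i\}}(j)=\pm s_I(j)$ according as $j<i$ or $j>i$; and for the Stokes identity the paper computes both sides on a generator $A_q=[a:e_{l_1},\ldots,e_{l_q}]$ and matches the alternating sum $\sum_i(-1)^{i-1}\bigl(f_{l_1\cdots\widehat{l_i}\cdots l_q}(a+e_{l_i})-f_{l_1\cdots\widehat{l_i}\cdots l_q}(a)\bigr)$, which is precisely your single-index calculation summed over the possible missing directions.
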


The above theorem contains analogues of integral theorems of 
(discrete) vector calculus. 
When $d=3$ and $G=\mathbb{Z}^3$, essentially same results are written in \cite{robidoux}. 
$B_0$, $B_1$, $B_2$, and
$B_3$ are regarded as an values of scalar fields, 
line integrals of vector fields, surface integrals of vector fields, and
volume integrals of scalar fields respectively. 
Therefore, 
\begin{align*}
B_1(D_1 \omega_{0}, \; A_{1})=B_{0}(\omega_{0}, \; D'_{1}A_{1}), \\
B_2(D_2 \omega_{1}, \; A_{2})=B_{1}(\omega_{1}, \; D'_{2}A_{2}), 
\end{align*}
and 
\begin{align*}
B_3(D_3 \omega_{2}, \; A_{3})=B_{2}(\omega_{2}, \; D'_{3}A_{3}), 
\end{align*}
correspond to  potential theorem, Stokes' theorem, and  divergence theorem respectively.

In addition, we consider whether  
the sequence of  $R$ modules 
\begin{align*}
&\Omega^0(S) \xrightarrow{D_1} 
\Omega^1(S) \xrightarrow{D_2} 
\cdots \xrightarrow{D_q} \cdots
\xrightarrow{D_d} 
\Omega^d(S) \end{align*}
is exact or not. This question seems natural in mathematics.  
For $q=1, \cdots, d-1$, put 
\begin{align*}
H^q(S)= \frac{\textrm{Ker} \; (D_{q+1}:\Omega^{q}\rightarrow \Omega^{q+1})}
{\textrm{Im} \; (D_q:\Omega^{q-1}\rightarrow \Omega^{q})}.
\end{align*}

In Section 3, 
we assume that 
$G=\mathbb{Z}^d$, 
\begin{align*}
e_1=(1, 0, \cdots, 0), \;
e_2=(0, 1, 0, \cdots, 0), \;
\cdots, \;
e_d=(0, \cdots, 0, 1),
\end{align*}
and $S=\mathbb{N}^d$. 
Then, an element of $\mathcal{A}(S)=\mathcal{A}(\mathbb{N}^d)$ is an arithmetical function of $d$ variables. 
 We  determine $H^q(S)=H^q(\mathbb{N}^3)$ in this special case. 
If we put $H^0(\mathbb{N}^d)=\textrm{Ker}\; D_1$ and $H^d(\mathbb{N}^d)=\mathcal{A}(S)/\textrm{Im}D_d$
, then $H^0(\mathbb{N}^d)=R$ and $H^d(\mathbb{N}^d)=0$ (by the definition of $D$ in Section 2) . In Section 3, 
the following theorem 
which is an analogue of 
Poincar\'{e}'s lemma will be proved .

\begin{theo}\label{poincare}
\begin{align*}
H^q(\mathbb{N}^d)=
\begin{cases}
R \;\;\; \textrm{if $q=0$,} \\
0 \;\;\; \textrm{otherwise.}
\end{cases}
\end{align*}
\end{theo}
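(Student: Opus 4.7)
The plan is to prove Theorem~\ref{poincare} by induction on $d$: the one-dimensional case is handled by telescoping summation, and the inductive step by splitting forms according to whether $dx_d$ appears.

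\textbf{Base case $d=1$.} Here the complex is just $\Omega^0(\mathbb{N}) \xrightarrow{D_1} \Omega^1(\mathbb{N})$ with $D_1 f = (f(n+1)-f(n))\,dx_1$. Its kernel is the constants, so $H^0(\mathbb{N}) = R$. Given any $g \in \mathcal{A}(\mathbb{N})$, the telescoping primitive $f(n) = \sum_{k=1}^{n-1} g(k)$ satisfies $D_1 f = g\,dx_1$, so $D_1$ is surjective and $H^1(\mathbb{N}) = 0$.

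\textbf{Inductive step.} Assume the theorem for $\mathbb{N}^{d-1}$. Every $\omega \in \Omega^q(\mathbb{N}^d)$ decomposes uniquely as $\omega = \alpha + \beta \wedge dx_d$ with $\alpha$ and $\beta$ free of $dx_d$. Splitting $D_q$ as a \emph{horizontal} differential $D'$ built from $\Delta_1, \ldots, \Delta_{d-1}$ and $dx_1, \ldots, dx_{d-1}$ plus a \emph{vertical} piece $\pm \Delta_d \wedge dx_d$, the equation $D\omega = 0$ reduces to $D'\alpha = 0$ and $D'\beta = \pm \Delta_d \alpha$. Applying the inductive hypothesis slice-by-slice (over $x_d$) to the first equation furnishes a primitive $\gamma \in \Omega^{q-1}(\mathbb{N}^d)$ with $D'\gamma = \alpha$; replacing $\omega$ by $\omega - D\gamma$ kills the $\alpha$-part and reduces to the case $\omega = \tilde\beta \wedge dx_d$ with $D'\tilde\beta = 0$. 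A second application of the inductive hypothesis to $\tilde\beta$, combined with telescoping summation in $x_d$ as in the base case, produces the remaining primitive. The degree-zero case is elementary: $D_1 f = 0$ forces each forward difference $f(a+e_i)-f(a)$ to vanish, so $f$ is constant on $\mathbb{N}^d$.

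\textbf{Main obstacle.} The subtle point is the coherent, $x_d$-dependent choice of primitives across slices: the inductive hypothesis on each slice only asserts \emph{existence}, and one must pick the slicewise primitives in a canonical way for them to assemble into an element of $\Omega^{q-1}(\mathbb{N}^d)$. I would handle this by unwinding the induction into an explicit chain homotopy $h: \Omega^q(\mathbb{N}^d) \to \Omega^{q-1}(\mathbb{N}^d)$ defined by iterated partial summation from the base point $(1, \ldots, 1) \in \mathbb{N}^d$ --- a discrete analogue of the classical cone formula for the smooth Poincar\'e lemma. Once such an $h$ is in hand, verifying $Dh + hD = \mathrm{id}$ on $\Omega^q$ for $q \geq 1$ (and $hD_1 = \mathrm{id} - \pi$ in degree zero, with $\pi$ the projection onto constants) reduces to a routine sign-bookkeeping calculation making essential use of $D^2 = 0$ from Theorem~\ref{stokes}.
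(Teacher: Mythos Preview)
Your approach is correct and in spirit the same as the paper's: both proceed by induction on $d$, with the inductive step driven by partial summation in the last coordinate. The paper packages this cleanly as a single chain-homotopy identity $\mathrm{Id}-\pi^{*}\!\circ s^{*}=DK+KD$ on $\Omega^{q}(\mathbb{N}^{d+1})$, where $K$ sends $f\,dx_{d+1}\wedge dx_{I'}$ to $\bigl(\sum_{k=1}^{t-1}f(n_{1},\dots,n_{d},k)\bigr)dx_{I'}$ and annihilates monomials not containing $dx_{d+1}$; this yields $H^{q}(\mathbb{N}^{d+1})\cong H^{q}(\mathbb{N}^{d})$ directly, without ever decomposing a given closed form by hand.

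One remark on your ``main obstacle'': it is not actually an obstacle in this setting. Arithmetical functions are bare set maps $\mathbb{N}^{d}\to R$ with no continuity or regularity requirement, so \emph{any} choice of slicewise primitive $\gamma_{t}\in\Omega^{q-1}(\mathbb{N}^{d-1})$ for each fixed $x_{d}=t$ assembles tautologically into an element of $\Omega^{q-1}(\mathbb{N}^{d})$. Your hands-on reduction (subtract $D\gamma$ to kill $\alpha$, then telescope in $x_{d}$ to handle $\tilde\beta\wedge dx_{d}$) therefore already goes through, and the iterated-summation homotopy $h$ you propose at the end is unnecessary extra machinery. The paper's one-variable $K$ is the simpler realization of the same idea.
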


The discrete Poincar\'{e} lemma has been considered in \cite{desbrun}. 
However, the above is different from it.

For an arithmetical function $a(n_1, n_2, n_3)$ of three variables, 
put 
\begin{align*}
\partial_1 a(n_1, n_2, n_3)=a(n_1+1, n_2, n_3) - a(n_1, n_2, n_3) \\
\partial_2 a(n_1, n_2, n_3)=a(n_1, n_2+1, n_3) - a(n_1, n_2, n_3) \\
\end{align*}
and
\begin{align*}
\partial_3 a(n_1, n_2, n_3)=a(n_1, n_2, n_3+1) - a(n_1, n_2, n_3) .
\end{align*}
The following corollary is obtained by Theorem \ref{poincare}. 
The first assertion and the second assertion correspond to existences of scalar potentials and vector potentials respectively.

\begin{coro}\label{poten}
Let $a_1(n_1, n_2, n_3)$, 
$a_2(n_1, n_2, n_3)$, and $a_3(n_1, n_2, n_3)$ 
be arithmetical functions of three variables. 
\begin{itemize}
\item[(1)] There exists an arithmetical function $b(n_1, n_2, n_3)$ such that 
\begin{align*}
\begin{cases}
\partial_1 b(n_1, n_2, n_3)=a_1(n_1, n_2, n_3) \\
\partial_2 b(n_1, n_2, n_3)=a_2(n_1, n_2, n_3) \\
\partial_3 b(n_1, n_2, n_3)=a_3(n_1, n_2, n_3)\\
\end{cases}
\end{align*}
if and only if  
\begin{align*}
\partial_j a_i (n_1, n_2, n_3) = \partial_i a_j (n_1, n_2, n_3) 
\end{align*}
for any $i$ and $j$.
\item[(2)] There exist three arithmetical functions $b_1(n_1, n_2, n_3)$, 
$b_2(n_1, n_2, n_3)$, and $b_3(n_1, n_2, n_3)$ such that 
\begin{align*}
\begin{cases}
a_1(n_1, n_2, n_3)= \partial_2 b_3(n_1, n_2, n_3) - \partial_3 b_2(n_1, n_2, n_3) \\
a_2(n_1, n_2, n_3)= \partial_3 b_1(n_1, n_2, n_3) - \partial_1 b_3(n_1, n_2, n_3)   \\
a_3(n_1, n_2, n_3)= \partial_1 b_2(n_1, n_2, n_3) - \partial_2 b_1(n_1, n_2, n_3) 
\end{cases}
\end{align*} 
if and only if 
\begin{align*}
\partial_1 a_1(n_1,  n_2, n_3)+\partial_2 a_2(n_1,  n_2, n_3)+ \partial_3 a_3(n_1, n_2, n_3) =0.
\end{align*}
\end{itemize}
\end{coro}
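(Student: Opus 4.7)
The plan is to deduce Corollary \ref{poten} from the $d=3$ case of Theorem \ref{poincare}. The idea is to package the triples $(a_1, a_2, a_3)$ as differential forms, so that solvability of the two systems of difference equations becomes the statement that certain forms lie in $\textrm{Im}\, D_1$ and $\textrm{Im}\, D_2$ respectively, and then translate the compatibility conditions on the right-hand sides into $D_2 \omega = 0$ and $D_3 \eta = 0$.

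For part (1), associate to $(a_1, a_2, a_3)$ the $1$-form $\omega = a_1\, dx_1 + a_2\, dx_2 + a_3\, dx_3 \in \Omega^1(\mathbb{N}^3)$. By the definition of $D_1$ in Section 2, $D_1 b = \sum_i (\partial_i b)\, dx_i$, so existence of a scalar potential $b$ is exactly $\omega \in \textrm{Im}\, D_1$. A direct expansion gives
\begin{align*}
D_2 \omega = \sum_{i<j} (\partial_i a_j - \partial_j a_i)\, dx_i \wedge dx_j,
\end{align*}
so the symmetry condition $\partial_j a_i = \partial_i a_j$ is precisely $D_2 \omega = 0$. The ``only if'' direction now follows from $D_2 D_1 \equiv 0$ in Theorem \ref{stokes}, and the ``if'' direction from exactness at $\Omega^1(\mathbb{N}^3)$, i.e.\ $H^1(\mathbb{N}^3) = 0$, furnished by Theorem \ref{poincare}. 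Part (2) is parallel: set
\begin{align*}
\eta = a_1\, dx_2 \wedge dx_3 - a_2\, dx_1 \wedge dx_3 + a_3\, dx_1 \wedge dx_2 \in \Omega^2(\mathbb{N}^3),
\end{align*}
with signs chosen so that $\eta = D_2(b_1\, dx_1 + b_2\, dx_2 + b_3\, dx_3)$ reproduces exactly the three stated curl identities; thus the vector-potential problem is $\eta \in \textrm{Im}\, D_2$. A short calculation then gives
\begin{align*}
D_3 \eta = (\partial_1 a_1 + \partial_2 a_2 + \partial_3 a_3)\, dx_1 \wedge dx_2 \wedge dx_3,
\end{align*}
so the divergence condition is equivalent to $D_3 \eta = 0$. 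The same dichotomy applies: ``only if'' via $D_3 D_2 \equiv 0$, ``if'' via $H^2(\mathbb{N}^3) = 0$.

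The only real obstacle is bookkeeping: one must pin down the sign conventions in $\eta$ so that $D_2$ applied to a $1$-form produces the three curl components with exactly the signs appearing in the statement, and verify the explicit formulas for $D_2$ and $D_3$ from Section 2. No substantive new idea beyond the $d=3$ case of Theorem \ref{poincare} is required.
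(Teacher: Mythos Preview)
Your proposal is correct and follows exactly the route the paper indicates: the paper simply remarks that the corollary is obtained from Theorem~\ref{poincare} and that parts (1) and (2) correspond to $H^1(\mathbb{N}^3)=0$ and $H^2(\mathbb{N}^3)=0$ respectively, without spelling out the translation. You have supplied the details (the choice of $\omega$ and $\eta$, the sign bookkeeping, and the appeal to $D_{q+1}D_q\equiv 0$ for the ``only if'' directions) that the paper leaves implicit.
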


\section{Proof of Theorem \ref{stokes}}

For 
$f  \in \mathcal{A}(S)$, we define  
$\partial_i f \in \mathcal{A}(S) (i=1, \cdots, d)$ by 
\begin{align*}
(\partial_i f )(a) 
= f(a+e_i) - f(a).
\end{align*}
Note that $\partial_i \partial_j=\partial_j \partial_i$.

First, we define exterior differential operators $D_q:\Omega^{q-1}(S) \rightarrow \Omega^{q}(S)$ $(q=1, \cdots, d)$ as follows. 
For  $f \in \Omega^0(S)$, put 
\begin{align*}
D_1(f)=\sum_{i=1}^{d} \partial_i f \; dx_i \in \Omega^1(S). 
\end{align*} 
For $\omega= \sum_{1 \leq i_1 < \cdots < i_{q-1} \leq d}f_{i_1  \cdots  i_{q-1} }
\; dx_{i_1} \wedge \cdots \wedge dx_{i_{q-1}} \in \Omega^{q-1}(S)$, put 
\begin{align*}
D_{q} \left( \omega \right)
= \sum_{1 \leq i_1 < \cdots < i_{q-1} \leq d} D_1 \left( f_{i_1  \cdots  i_{q-1} } \right) \wedge
\; dx_{i_1} \wedge \cdots \wedge dx_{i_{q-1}} \in \Omega^q(S).
\end{align*}

We have the next lemma.

\begin{lem}
 $\textrm{Im}(D_{q}) \subset \textrm{Ker}(D_{q+1}) $. 
\end{lem}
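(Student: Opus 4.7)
The plan is to prove $D_{q+1} \circ D_q = 0$ by direct computation, exploiting two facts simultaneously: the commutativity of the finite-difference operators $\partial_i \partial_j = \partial_j \partial_i$ (already observed in the text) and the anticommutativity of the wedge product $dx_k \wedge dx_\ell = -dx_\ell \wedge dx_k$. By $\mathcal{A}(S)$-linearity (which in particular gives $R$-linearity) it is enough to check the identity on a basis element $\omega = f \, dx_{i_1} \wedge \cdots \wedge dx_{i_{q-1}}$ with $i_1 < \cdots < i_{q-1}$.

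First I would apply the definition of $D_q$ to $\omega$ to obtain
\begin{align*}
D_q(\omega) = D_1(f) \wedge dx_{i_1} \wedge \cdots \wedge dx_{i_{q-1}} = \sum_{k=1}^{d} \partial_k f \; dx_k \wedge dx_{i_1} \wedge \cdots \wedge dx_{i_{q-1}}.
\end{align*}
Note that this expression, although not written in the strictly increasing-index basis form, still lies in $\Omega^q(S)$; the definition of $D_{q+1}$ extends by $R$-linearity to any such presentation because one can first reorder the wedge factors (introducing signs and killing the terms where $k \in \{i_1, \dots, i_{q-1}\}$) and then apply $D_{q+1}$ term by term. Applying $D_{q+1}$ to each summand gives
\begin{align*}
D_{q+1}(D_q(\omega)) = \sum_{k=1}^{d} \sum_{\ell=1}^{d} \partial_\ell \partial_k f \; dx_\ell \wedge dx_k \wedge dx_{i_1} \wedge \cdots \wedge dx_{i_{q-1}}.
\end{align*}

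Next I would split the double sum over ordered pairs $(k,\ell)$ into the diagonal part ($k=\ell$), which vanishes because $dx_k \wedge dx_k = 0$, and the off-diagonal part, which I group into pairs $\{(k,\ell),(\ell,k)\}$ with $k \neq \ell$. In each such pair the coefficients $\partial_\ell \partial_k f$ and $\partial_k \partial_\ell f$ are equal, while the wedge factors $dx_\ell \wedge dx_k$ and $dx_k \wedge dx_\ell$ differ by a sign, so the two contributions cancel. Hence $D_{q+1}(D_q(\omega)) = 0$.

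I do not foresee a genuine obstacle: the argument is a template mimicking the proof that $d^2 = 0$ in the smooth de Rham complex, and everything hinges on the two symmetries mentioned above. The only mild care needed is justifying the term-by-term application of $D_{q+1}$ to an expression not written in the strict basis form of $\Omega^q(S)$, which is immediate from $R$-linearity of $D_{q+1}$ together with the defining formula written on the basis.
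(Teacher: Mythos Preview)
Your argument is correct and is exactly the standard de Rham computation the paper alludes to when it says ``the proof is straightforward; it is the same as the theory of differential forms (see \cite{bott} or \cite{madsen}).'' One small terminological slip: $D_q$ is \emph{not} $\mathcal{A}(S)$-linear (since $\partial_i(gf)\neq g\,\partial_i f$ in general), so the reduction to a single term $f\,dx_{i_1}\wedge\cdots\wedge dx_{i_{q-1}}$ should be justified by additivity (equivalently, $R$-linearity) of $D_q$ and $D_{q+1}$ alone, which is all you actually use.
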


\begin{proof}
The proof is straightforward. It is the same as the theory of differential forms. 
(See \cite{bott} or \cite{madsen}.)
\end{proof}

When $d=3$,  $D_1$, $D_2$, and $D_3$ correspond to the gradient, the rotation, 
and the divergence respectively.

\begin{rem}
Let $A$ be an $R$ module and consider $R$ homomorphisms 
\begin{align*}
\partial_i: A \longrightarrow A \;\;\; (i=1, \cdots, d)
\end{align*}
such that 
$\partial_i \partial_j = \partial_j \partial_i$ for any $i$ and  $j$. 
For $q=0, \cdots, d$, put 
\begin{align*}
\Omega^{q}=\{ (f_{i_1 \cdots i_{q}})_{1 \leq  i_1 < \cdots < i_{q}\leq d} \; 
| \; f_{i_1 \cdots i_{q}} \in A \}.  
\end{align*} 
In general, we can also do the same argument about these. 
\end{rem}

Before construction boundary maps, 
we introduce some notation. 
For $I \subset \mathbb{N}$ and 
$j \in \mathbb{N}$, 
let 
\begin{align*}
s_I(j)=
\begin{cases}
+1 \;\;\; \textrm{ the order of $\{ i \in I \; |\;  i < j \}$ is even,} \\
-1 \;\;\;  \textrm{ the oder of $\{ i \in I \; |\;  i < j \}$ is odd.}
\end{cases} 
\end{align*} 
Now, we construct  $D'_q$. 
We define $D_1':N_1(S) \rightarrow N_0(S)$ by 
\begin{align*}
D_1'(E)=[a+e_l] - [a]
\end{align*}
for $E=[a:e_l]$. 
Let $q=2, \cdots, d$. 
We define $D_q':N_q(S) \rightarrow N_{q-1}(S)$ by  
\begin{align*}
&D_q'(A) \\
&= \sum_{i=1}^{q} s_I(l_i) \left([a+e_{l_i}:e_{l_1}, \cdots, e_{l_{i-1}}, e_{l_{i+1}} \cdots, e_{l_q} ] 
- [a: e_{l_1}, \cdots, e_{l_{i-1}}, e_{l_{i+1}} \cdots, e_{l_q} ]\right)
\end{align*}
for $A=[a:e_{l_1}, \cdots, e_{l_q}]$ $(l_1 < \cdots < l_q)$. Here $I=\{ l_1, \cdots, l_q \}$.
We extend these maps as $R$ linear maps.

For example, 
\begin{align*}
D_2'(S)= [a+e_1:e_2] - [a: e_2]
- [a+e_2; e_1] + [a;e_1]
\end{align*}
 for $S=[a:e_1, e_2] \in N_2(S)$. 
In addition, we see 
\begin{align*}
D_3'(C)=[a+e_1:e_2, e_3] -[a: e_2, e_3] \\
-[a+e_2:\ e_1, e_3] +[a: e_1, e_3] \\
+[a+e_3:e_1, e_2] -[a: e_1, e_2]
\end{align*}
for  $C=[a:e_1, e_2, e_3], 
\in N_3(S)$ and 
\begin{align*}
D_4'(A) 
&=[a+e_1:e_2, e_3, e_4]
- [a:e_2, e_3, e_4] \\ 
&- [a+e_2:e_1, e_3, e_4] 
+ [a:e_1, e_3, e_4] \\
&+ [a+e_3:e_1, e_2, e_4]
- [a:e_1, e_2, e_4] \\
&-[a+e_4:e_1, e_2, e_3] 
+[a: e_1, e_2, e_3] 
\end{align*}
for $A=[a: e_1, e_2, e_3, e_4], 
\in N_4(S)$.

\begin{lem}
$\textrm{Im}(D'_{q+1}) \subset \textrm{Ker}(D'_{q}) $
\end{lem}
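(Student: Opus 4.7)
My plan is to show directly that $D'_q \circ D'_{q+1} = 0$ on each generator $A = [a: e_{l_1}, \ldots, e_{l_{q+1}}]$ with $l_1 < \cdots < l_{q+1}$, by organizing the double sum so that the standard ``boundary of boundary'' cancellation appears. Writing $I = \{l_1, \ldots, l_{q+1}\}$, the element $D'_{q+1}(A)$ is an alternating sum over $i$ of the faces where the $i$-th direction has been removed (with and without a shift by $e_{l_i}$). Applying $D'_q$ to each face produces a further alternating sum over a second index $j \neq i$. Expanding everything yields terms of the shape
\begin{align*}
\pm\, [a + \varepsilon_i e_{l_i} + \varepsilon_j e_{l_j} : \text{$I$ with $l_i,l_j$ removed}],
\qquad \varepsilon_i,\varepsilon_j \in \{0,1\}.
\end{align*}

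Next I would pair each such term with its twin obtained by swapping the roles of $i$ and $j$: that twin picks the same two indices out of $I$ and produces the same vertex $a + \varepsilon_i e_{l_i} + \varepsilon_j e_{l_j}$ with the same deleted directions, but the deletions happen in the opposite order. The whole argument then reduces to checking that these two occurrences carry opposite signs.

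The main (and essentially only) obstacle is the sign bookkeeping, so I would do it cleanly once and for all. Assume $l_i < l_j$. From the definition of $s_I$ one has $s_I(l_i) = (-1)^{i-1}$ and $s_I(l_j) = (-1)^{j-1}$; when we delete $l_i$ from $I$ before looking at $l_j$, the count of elements below $l_j$ drops by one, giving $s_{I\setminus\{l_i\}}(l_j) = (-1)^{j-2}$, whereas deleting $l_j$ does not change the count of elements below $l_i$, so $s_{I\setminus\{l_j\}}(l_i) = (-1)^{i-1}$. Therefore
\begin{align*}
s_I(l_i)\, s_{I\setminus\{l_i\}}(l_j) = (-1)^{i+j-3},
\qquad
s_I(l_j)\, s_{I\setminus\{l_j\}}(l_i) = (-1)^{i+j-2},
\end{align*}
which differ by a sign. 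Since in the definition of $D'$ the shifted and unshifted face enter with opposite signs independently at each stage, the same sign flip controls all four combinations of $\varepsilon_i,\varepsilon_j$; thus each pair of twin terms cancels.

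Finally I would note that every term of $D'_q D'_{q+1}(A)$ belongs to exactly one such twin pair, so the entire expansion telescopes to zero. Extending $R$-linearly to arbitrary elements of $N_{q+1}(S)$ gives $D'_q D'_{q+1} \equiv 0$, i.e.\ $\mathrm{Im}(D'_{q+1}) \subset \mathrm{Ker}(D'_q)$, as required. (This is formally the dual of the Koszul-style computation invoked in the previous lemma for the $D_q$, as Remark~3 already hints.)
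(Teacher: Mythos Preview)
Your argument is correct and essentially coincides with the paper's own proof. Both reduce to the sign identity $s_I(l_i)\,s_{I\setminus\{l_i\}}(l_j) = -\,s_I(l_j)\,s_{I\setminus\{l_j\}}(l_i)$ and then pair the $(i,j)$- and $(j,i)$-contributions; the only cosmetic difference is that the paper first groups the expansion into the four sums $\Sigma_1,\Sigma_2,\Sigma_3,\Sigma_4$ according to $(\varepsilon_i,\varepsilon_j)$ and cancels within each, whereas you observe that the $(-1)^{\varepsilon_i+\varepsilon_j}$ factor is common to both orderings and cancel all four types at once.
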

\begin{proof}
Since boundary maps are linear, we shall show  
$D'_{q-1}D'_{q}A=0$
for $A=[a:e_{i_1}, \cdots, e_{i_q}] \in N_q(S)$. We may assume that 
$\{ i_1, \cdots, i_q\}=\{ 1, \cdots, q \}$ without loss of generality. 

First, we see 
\begin{align*}
D'_1D'_2(S) =&D_1[a+e_1:e_2] - D_1[a: e_2]
- D_1[a+e_2: e_1] + D_1[a:e_1] \\
= &\left( [a+e_1+e_2]-[a+e_1] \right)
- \left( [a+e_2]-[a] \right) \\
&-  \left( [a+e_1+e_2]-[a+e_2] \right)
+  \left( [a+e_1]-[a] \right)=0
\end{align*}
for $S=[a:e_1, e_2] \in N_2(S)$.

Next, we see 
\begin{align*}
D'D'(A) 
&= \sum_{i=1}^{q} s_I(i) \left( D' [a+e_{i}:
\overbrace{e_{1},  \cdots, e_{q}}^{\textrm{omit $i$}}] 
- D'[a: \overbrace{e_{1},  \cdots, e_{q}}^{\textrm{omit $i$}} ]\right)
\end{align*}
for $A=[a:e_1, \cdots, e_q]$.
 Here $I=\{ 1, \cdots, q \}$. 
Moreover, we see  
\begin{align*}
D'D'(A) 
&= \sum_{i=1}^{q} s_I(i)
 \sum_{j=1 \atop j \neq i}^{q} s_{I\setminus \{ i\}}(j) 
\left(
 [a+e_i+e_j:
\overbrace{e_1,  \cdots, e_q}^{\textrm{omit $i$ and  $j$}}]  
 -  [a+e_i:
\overbrace{e_1,  \cdots, e_q}^{\textrm{omit $i$ and $j$}}]\right) \\
&- 
\sum_{i=1}^{q} s_I(i)
 \sum_{j=1 \atop j \neq i}^{q} s_{I\setminus \{ i\}}(j) 
 \left( [a+e_j:
\overbrace{e_1,  \cdots, e_q}^{\textrm{omit $i$ and $j$}}]  
 -  [a:
\overbrace{e_1,  \cdots, e_q}^{\textrm{omit $i$ and $j$}}]   
\right) \\
& =: \Sigma_1-\Sigma_2-\Sigma_3+\Sigma_4. 
\end{align*} 
If we show $\Sigma_4=0$, $\Sigma_1=0$, and $\Sigma_2+\Sigma_3=0$, the assertion follows. 
Note that 
\begin{align*}
s_{I\setminus \{ i \}}(j)=
\begin{cases}
s_I(j) \;\;\; \textrm{ if $j<i$,} \\
-s_I(j) \;\;\;  \textrm{ if $j>i$}.
\end{cases} 
\end{align*}
Then we have 
\begin{align*}
\Sigma_4=&\sum_{i=1}^{q} s_I(i)
 \sum_{j=1 \atop j \neq i}^{q} s_{I\setminus \{ i\}}(j) 
 [a:
\overbrace{e_1,  \cdots, e_q}^{\textrm{omit $i$ and $j$}}]   \\
&=
\sum_{i=1}^{q} s_I(i)\left(
 \sum_{j < i} s_{I\setminus \{ i\}}(j) 
 [a:
\overbrace{e_1,  \cdots, e_q}^{\textrm{omit $i$ and $j$}}]   
+
 \sum_{j > i} s_{I\setminus \{ i\}}(j) 
 [a:
\overbrace{e_1,  \cdots, e_q}^{\textrm{omit $i$ and $j$}}] 
\right) \\
&=
\sum_{i=1}^{q} s_I(i)\left(
 \sum_{j < i} s_I(j)
 [a:
\overbrace{e_1,  \cdots, e_q}^{\textrm{omit $i$ and $j$}}]   
-
 \sum_{j > i} s_I(j) 
 [a:
\overbrace{e_1,  \cdots, e_q}^{\textrm{omit $i$ and $j$}}] 
\right) \\
&=
\sum_{i=1}^{q}
 \sum_{j < i}  s_I(i)s_I(j)
 [a:
\overbrace{e_1,  \cdots, e_q}^{\textrm{omit $i$ and $j$}}]   
-\sum_{i=1}^{q}  \sum_{j > i} s_I(i)s_I(j) 
 [a:
\overbrace{e_1,  \cdots, e_q}^{\textrm{omit $i$ and $j$}}]. 
\end{align*}
Thus, $\Sigma_4=0$. 
Similarly, we obtain 
\begin{align*}
\Sigma_1 &=\sum_{i=1}^{q} s_I(i)
 \sum_{j=1 \atop j \neq i}^q s_{I\setminus \{ i\}}(j) 
 [a+e_i+e_j:
\overbrace{e_1,  \cdots, e_q}^{\textrm{omit $i$ and $j$}}]  \\
&= 
\sum_{i=1}^{q} s_I(i)\left(
 \sum_{j < i} s_{I\setminus \{ i\}}(j) 
 [a+e_i+e_j:
\overbrace{e_1,  \cdots, e_q}^{\textrm{omit $i$ and $j$}}]   
+
 \sum_{j > i} s_{I\setminus \{ i\}}(j) 
 [a+e_i+e_j:
\overbrace{e_1,  \cdots, e_q}^{\textrm{omit $i$ and $j$}}] 
\right) \\
&= 
\sum_{i=1}^{q} s_I(i)\left(
 \sum_{j < i} s_I(j)
 [a+e_i+e_j:
\overbrace{e_1,  \cdots, e_q}^{\textrm{omit $i$ and $j$}}]   
-
 \sum_{j > i} s_I(j) 
 [a+e_i+e_j:
\overbrace{e_1,  \cdots, e_q}^{\textrm{omit $i$ and $j$}}] 
\right) \\
&=
\sum_{i=1}^{q}
 \sum_{j < i}  s_I(i)s_I(j)
 [a+e_i+e_j:
\overbrace{e_1,  \cdots, e_q}^{\textrm{omit $i$ and $j$}}]   
-\sum_{i=1}^{q}  \sum_{j > i} s_I(i)s_I(j) 
 [a+e_i+e_j:
\overbrace{e_1,  \cdots, e_q}^{\textrm{omit $i$ and $j$}}]. 
\end{align*}
Thus, $\Sigma_1=0$. 
In addition, 
\begin{align*}
&\Sigma_2+\Sigma_3 \\
=&\sum_{i=1}^{q} s_I(i)
 \sum_{j=1 \atop j \neq i}^{q} s_{I\setminus \{ i\}}(j) 
 [a+e_i:
\overbrace{e_1,  \cdots, e_q}^{\textrm{omit $i$ and $j$}}]
+ 
\sum_{i=1}^{q} s_I(i)
 \sum_{j=1 \atop j \neq i}^{q} s_{I\setminus \{ i\}}(j) 
 [a+e_j:
\overbrace{e_1,  \cdots, e_q}^{\textrm{omit $i$ and $j$}}]  \\
=& \sum_{i=1}^{q} s_I(i)
 \sum_{j<i} s_{I\setminus \{ i\}}(j) 
 [a+e_i:
\overbrace{e_1,  \cdots, e_q}^{\textrm{omit $i$ and $j$}}]
+
\sum_{i=1}^{q} s_I(i)
 \sum_{j >i} s_{I\setminus \{ i\}}(j) 
 [a+e_i:
\overbrace{e_1,  \cdots, e_q}^{\textrm{omit $i$ and $j$}}] \\
+& 
\sum_{i=1}^{q} s_I(i)
 \sum_{j <  i} s_{I\setminus \{ i\}}(j) 
 [a+e_j:
\overbrace{e_1,  \cdots, e_q}^{\textrm{omit $i$ and $j$}}] 
+
\sum_{i=1}^{q} s_I(i)
 \sum_{ j > i} s_{I\setminus \{ i\}}(j) 
 [a+e_j:
\overbrace{e_1,  \cdots, e_q}^{\textrm{omit $i$ and $j$}}].  
\end{align*}
Hence, 
\begin{align*}
&\Sigma_2+\Sigma_3 \\
=& \sum_{i=1}^{q} s_I(i)
 \sum_{j<i} s_{I}(j) 
 [a+e_i:
\overbrace{e_1,  \cdots, e_q}^{\textrm{omit $i$ and $j$}}]
-
\sum_{i=1}^{q} s_I(i)
 \sum_{j >i} s_{I}(j) 
 [a+e_i:
\overbrace{e_1,  \cdots, e_q}^{\textrm{omit $i$ and $j$}}] \\
+& 
\sum_{i=1}^{q} s_I(i)
 \sum_{j <  i} s_{I}(j) 
 [a+e_j:
\overbrace{e_1,  \cdots, e_q}^{\textrm{omit $i$ and $j$}}] 
-
\sum_{i=1}^{q} s_I(i)
 \sum_{ j > i} s_{I}(j) 
 [a+e_j:
\overbrace{e_1,  \cdots, e_q}^{\textrm{omit $i$ and $j$}}]. 
\end{align*}
Thus, $\Sigma_2+\Sigma_3=0$. 
\end{proof}

Moreover, we define integrals $B_q$.  
For $f \in \Omega^0(S)$ and $[a] \in N_0(S)$, define 
\begin{align*}
B_0 \left( f, [a] \right) = f(a).
\end{align*}
Let $q=1, \cdots, d$. 
For 
\begin{align*}
\omega=
\sum_{1 \leq i_1 < \cdots < i_q \leq d}f_{i_1  \cdots  i_q }
\; dx_{i_1} \wedge \cdots \wedge dx_{i_q} \in \Omega^q(S)
\end{align*}
and $A=[a:e_{l_1}, \cdots, e_{l_q}] \in N_q(S)$, define 
\begin{align*}
B_q(\omega, A)=f_{l_1 \cdots l_q}(a).
\end{align*}
We extend this map as bilinear map.

It remains to prove  analogues of integral theorems. 
Let $f \in \Omega^0(S)$. For 
\begin{align*}
D_1(f)=\sum_{i=1}^{d} \partial_i f dx_i  \in \Omega^1(S)
\end{align*}
and $E=[a: e_l]$, we see  
\begin{align*}
B_1(D_1f, E)= (\partial_l f)(a)=f(a+e_l)-f(a). 
\end{align*}
We also see 
\begin{align*}
B_0(f, D_1'E)=B_0(f, [a+e_l]-[a])=f(a+e_l)-f(a).
\end{align*}
Thus, $B_1(D_1f, E)=B_0(f, D_1'E)$.

For $\omega=\sum_{i_1< \cdots < i_{q-1}} f_{i_1 \cdots i_{q-1}} \; dx_{i_1} 
\wedge \cdots \wedge dx_{i_{q-1}} \in \Omega^{q-1}(S)$ and 
$A=[a:e_{l_1}, \cdots, e_{l_q}] \in N_q(S)$
$(l_1 < \cdots < l_q)$,  
we see 
\begin{align*}
D \left( \omega \right)
&= \sum_{1 \leq i_1 < \cdots < i_{q-1} \leq d} D_1 \left( f_{i_1  \cdots  i_{q-1} } \right) \wedge
\; dx_{i_1} \wedge \cdots \wedge dx_{i_{q-1}} \\
&= 
\sum_{1 \leq i_1 < \cdots < i_{q-1} \leq d}  \left( \sum_{j=1}^{d} \partial_j f_{i_1  \cdots  i_{q-1} } dx_j \right) \wedge
\; dx_{i_1} \wedge \cdots \wedge dx_{i_{q-1}} \\
&= 
\sum_{1 \leq i_1 < \cdots < i_{q-1} \leq d} 
\sum_{j \neq i_1, \cdots, i_{q-1}} 
\partial_j f_{i_1  \cdots  i_{q-1} } 
dx_j  \wedge dx_{i_1} \wedge \cdots \wedge dx_{i_{q-1}} \\
&= 
\partial_{l_1} f_{l_2  \cdots  l_{q} } 
dx_{l_1}  \wedge dx_{l_2} \wedge \cdots \wedge dx_{l_{q}} \\
&+
\partial_{l_2} f_{l_1 l_3  \cdots  l_{q} } 
dx_{l_2}  \wedge dx_{l_1} \wedge dx_{l_3} \wedge \cdots \wedge dx_{l_{q}} \\
&+ 
\partial_{l_3} f_{l_1 l_2 l_4 \cdots  l_{q} } 
dx_{l_3}  \wedge dx_{l_1} \wedge dx_{l_2} \wedge dx_{l_4} \wedge 
\cdots \wedge dx_{l_{q}} \\
&+ \cdots \\
&+\partial_{l_q} f_{l_1  \cdots  l_{q-1} } 
dx_{l_q}  \wedge dx_{l_1} \wedge dx_{l_2} \wedge dx_{l_4} \wedge 
\cdots \wedge dx_{l_{q-1}} \\
&+ 
\textrm{other terms}.
\end{align*}
Hence, 
\begin{align*}
B_q(D\omega,  A) &= \left( 
\partial_{l_1} f_{l_2  \cdots  l_{q} } 
-
\partial_{l_2} f_{l_1 l_3  \cdots  l_{q} } 
+ 
\cdots
+(-1)^{q-1}  \partial_{l_q} f_{l_1  \cdots  l_{q-1}} 
\right) (a) \\
&= 
\left( 
f_{l_2  \cdots  l_{q} }(a+e_{l_1})- f_{l_2  \cdots  l_{q} }(a) 
\right) 
-
\left( 
f_{l_1 l_3  \cdots  l_{q} } (a+e_{l_2})- f_{l_1 l_3  \cdots  l_{q} } (a) 
\right) \\
&+ 
\cdots 
+(-1)^{q-1}
\left( 
f_{l_1  \cdots  l_{q-1} } (a+e_{l_q})- f_{l_1  \cdots  l_{q-1} } (a) 
\right).
\end{align*}

We also see 
\begin{align*}
B_{q-1}(\omega, D'A) 
&=  
\left(B_{q-1}(\omega, [a+e_{l_1}:e_{l_2},  \cdots, e_{l_q} ]) 
- B_{q-1}(\omega, [a: e_{l_2},  \cdots, e_{l_q} ])\right) \\
&-
\left(B_{q-1}(\omega, [a+e_{l_2}:e_{l_1}, e_{l_{3}}, \cdots, e_{l_q} ]) 
- B_{q-1}(\omega, [a: e_{l_1}, e_{l_{3}},  \cdots, e_{l_q} ])\right) \\
&+ \cdots \\
&(-1)^{q-1}
\left(B_{q-1}(\omega, [a+e_{l_q}:e_{l_1},  \cdots, e_{l_{q-1}} ] )
- B_{q-1}(\omega, [a: e_{l_1},  \cdots, e_{l_{q-1}}]) \right) \\
 &= 
\left( 
f_{l_2  \cdots  l_{q} }(a+e_{l_1})- f_{l_2  \cdots  l_{q} }(a) 
\right) 
-
\left( 
f_{l_1 l_3  \cdots  l_{q} } (a+e_{l_2})- f_{l_1 l_3  \cdots  l_{q} } (a) 
\right) \\
&+ 
\cdots 
+(-1)^{q-1}
\left( 
f_{l_1  \cdots  l_{q-1} } (a+e_{l_q})- f_{l_1  \cdots  l_{q-1} } (a) 
\right).
\end{align*}
Thus, $B_q(D_q\omega, A)=B_{q-1}(\omega, D_q'A)$.

\section{Proof of Theorem \ref{poincare}}
Our purpose is to show an analogue of Poincar\'{e}'s lemma. 
The statements of Corollary \ref{poten} (1) and (2) correspond to $H^1(\mathbb{N}^3)=0$ and $H^2(\mathbb{N}^3)=0$ respectively. 

First, we note that  the fact $H^1(\mathbb{N}^d)=0$ can be  proved by an elementary method. 

\begin{prop}
$H^1(\mathbb{N}^d)=0$. 
\end{prop}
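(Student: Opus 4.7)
The plan is to prove $H^1(\mathbb{N}^d)=0$ by an explicit construction: given a closed $1$-form $\omega=\sum_{i=1}^d f_i\,dx_i$ (meaning $D_2\omega=0$, which by the definition of $D_2$ amounts to $\partial_i f_j=\partial_j f_i$ for all $i,j$), I would exhibit an arithmetical function $b\in\mathcal{A}(\mathbb{N}^d)$ satisfying $\partial_i b=f_i$ for every $i$, obtained by ``integrating along a staircase path'' from the base point $(1,1,\ldots,1)$.

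Concretely, I would set $b(1,\ldots,1)=0$ and define
\begin{align*}
b(n_1,\ldots,n_d)=\sum_{j=1}^{d}\sum_{k=1}^{n_j-1} f_j(n_1,\ldots,n_{j-1},k,1,\ldots,1),
\end{align*}
with the convention that an empty sum is zero. The inclusion $\mathrm{Im}\,D_1\subseteq\mathrm{Ker}\,D_2$ is already covered by the previous lemma, so it remains to check $b\in\mathrm{Ker}\,D_1$ maps appropriately—that is, that $\partial_i b = f_i$ for $1\le i\le d$.

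First I would verify the case $i=d$: since only the outer $j=d$ summand depends on $n_d$, the difference $b(\ldots,n_d+1)-b(\ldots,n_d)$ collapses immediately to $f_d(n_1,\ldots,n_d)$. For $i<d$, I would split the computation of $\partial_i b$ according to the three ranges $j<i$, $j=i$, and $j>i$. The terms with $j<i$ cancel because they do not depend on the $i$-th coordinate. The term with $j=i$ contributes precisely $f_i(n_1,\ldots,n_{i-1},n_i,1,\ldots,1)$. The terms with $j>i$ produce sums of $\partial_i f_j$ evaluated along the staircase; here the closedness hypothesis $\partial_i f_j = \partial_j f_i$ converts each such sum into a telescoping sum of $\partial_j f_i$ over the variable $k$, producing $f_i(\ldots,n_j,1,\ldots,1)-f_i(\ldots,1,1,\ldots,1)$ for each $j>i$. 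Adding the $j=i$ term to this telescoping cascade over $j=i+1,\ldots,d$ yields $f_i(n_1,\ldots,n_d)$, as required.

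The one delicate point—where I would be most careful—is the bookkeeping for $j>i$: the closedness relation must be applied at each argument position along the staircase, and the telescoping has to be carried out over the $k$-variable before one can chain the results across successive $j$. Beyond that index manipulation the argument is routine, and no further hypothesis on the closed form is needed. This gives a direct, constructive proof that every closed $1$-form on $\mathbb{N}^d$ is exact, i.e.\ $H^1(\mathbb{N}^d)=0$.
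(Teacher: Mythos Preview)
Your proposal is correct and is essentially identical to the paper's own proof: the same ``staircase'' primitive
\[
b(n_1,\ldots,n_d)=\sum_{j=1}^{d}\sum_{k=1}^{n_j-1} f_j(n_1,\ldots,n_{j-1},k,1,\ldots,1)
\]
is constructed, and the verification $\partial_i b=f_i$ proceeds by the same case split ($j<i$ terms vanish, $j=i$ gives the base value, $j>i$ uses $\partial_i f_j=\partial_j f_i$ and telescopes). The only slip is the phrase ``check $b\in\mathrm{Ker}\,D_1$ maps appropriately''---you mean that $D_1 b=\omega$, not that $b$ lies in the kernel.
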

\begin{proof}
Let $f_i (i=1, \cdots, d)$ be  arithmetical functions of $d$ variables such that 
$\partial_i f_j = \partial_j f_i$ for any $i$ and $j$.
It is sufficient to show that there exists an arithmetical function $F(n_1, \cdots, n_d)$ of $d$ 
variables such that $\partial_i F = f_i$ for any $i$. 

Put 
\begin{align*}
F(n_1, \cdots, n_d) &=\sum_{k_1=1}^{n_1-1}f_1(k_1, 1, \cdots, 1) \\
&+ \sum_{k_2=1}^{n_2-1}f_2(n_1, k_2, 1, \cdots, 1) \\
&+ \cdots \\
&+ \sum_{k_d=1}^{n_d-1}f_d(n_1, \cdots, n_{d-1}, k_d).
\end{align*}
 
Then, we see 
\begin{align*}
(\partial_i F)(n_1, \cdots, n_d) &=
\partial_i \left( 
\sum_{k_i=1}^{n_i-1}f_i(n_1, \cdots, n_{i-1}, k_i, 1, \cdots, 1)  \right) \\
&+ 
\sum_{k_{i+1}=1}^{n_{i+1}-1}(\partial_i f_{i+1})(n_1, \cdots, n_i, k_{i+1}, 1, \cdots, 1) 
 \\
&+ \cdots \\
&+ 
\sum_{k_d=1}^{n_d-1}(\partial_i f_d)(n_1, \cdots, n_{d-1}, k_d). 
\end{align*}
By the assumption, we see 
\begin{align*}
(\partial_i F)(n_1, \cdots, n_d) = 
&f_i(n_1, \cdots, n_i, 1, \cdots, 1)   \\
&+ 
\sum_{k_{i+1}=1}^{n_{i+1}-1}(\partial_{i+1} f_{i})(n_1, \cdots, n_i, k_{i+1}, 1, \cdots, 1) 
 \\
&+ \cdots \\
&+ 
\sum_{k_d=1}^{n_d-1}(\partial_d f_i)(n_1, \cdots, n_{d-1}, k_d) \\
= 
&f_i(n_1, \cdots, n_i, 1, \cdots, 1)   \\
&+ 
 f_{i}(n_1, \cdots, n_{i+1}, 1, \cdots, 1) -
f_{i}(n_1, \cdots, n_i, 1, \cdots, 1)
 \\
&+ \cdots \\
&+ 
 f_i(n_1, \cdots, n_d) - 
 f_i(n_1, \cdots, n_{d-1}, 1). 
\end{align*}
Thus $\partial_i F = f_i$. 
\end{proof}

It remains to show that $H^q(\mathbb{N}^d)=0$ for $q \geq 2$. 
Let us show that 
$H^q(\mathbb{N}^{d+1})$ and $H^q(\mathbb{N}^d)$ are isomorphic as $R$ modules. 
The proof is based on the similar method in  \cite{bott} or \cite{madsen}. 

For simplicity of notation, if 
$I=\{i_1, \cdots, i_q\}$ $(i_1 < \cdots< i_q)$, we write $f(n_1, \cdots, n_d)dx_I$
 instead of $f(n_1, \cdots, n_d)dx_{i_1}\cdots dx_{i_q}$.

We define $\pi:\mathbb{N}^{d+1}\rightarrow\mathbb{N}^d$ by 
$\pi(n_1, \cdots, n_d, t)=(n_1, \cdots, n_d)$ and 
 $\pi^*:\Omega^{q}(\mathbb{N}^{d})\rightarrow \Omega^{q}(\mathbb{N}^{d+1})$ by 
\begin{align*}
\pi^{*}(\omega)=  f \circ \pi (n_1, \cdots, n_d, t) 
dx_I 
\end{align*}
where 
\begin{align*}
\omega=f(n_1, \cdots, n_d) 
dx_I. 
\end{align*}

We define $s:\mathbb{N}^{d}\rightarrow\mathbb{N}^{d+1}$ by
$s(n_1, \cdots, n_d)=(n_1, \cdots, n_d, 1)$. 
In addition, we define  
$s^*:\Omega^{q}(\mathbb{N}^{d+1})\rightarrow \Omega^{q}(\mathbb{N}^{d})$ by 
\begin{align*}
s^{*}(\omega)=  
\begin{cases}
f \circ s (n_1, \cdots, n_d) 
dx_I \;\;\; \textrm{if $d+1 \not\in  I$,} \\
0 \;\;\; \textrm{if $d+1 \in  I$}
\end{cases}
\end{align*}
where
\begin{align*}
\omega=f(n_1, \cdots, n_d, t) 
dx_I.  
\end{align*}

An easy verification shows the following lemma. 

\begin{lem}
$\pi^*$ and $s^*$ are compatible with $D$.  
(In particular, a closed form is mapped to a closed form by $\pi^*$ or $s^*$.)
\end{lem}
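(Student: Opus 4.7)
The plan is to verify the two identities $D\circ\pi^{*}=\pi^{*}\circ D$ and $D\circ s^{*}=s^{*}\circ D$ by direct computation on monomial generators $\omega = f\,dx_I$, since both sides are $R$-linear and thus it suffices to check on a basis. Two elementary facts about the difference operators do all the work: $\partial_{d+1}(f\circ\pi)\equiv 0$, because $f\circ\pi$ is independent of the last coordinate, and $\partial_j(f\circ s) = (\partial_j f)\circ s$ for $1\le j\le d$, because $s$ fixes the first $d$ coordinates.

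For the $\pi^{*}$ half I would expand $D(\pi^{*}\omega) = \sum_{j=1}^{d+1}\partial_j(f\circ\pi)\,dx_j\wedge dx_I$, discard the $j=d+1$ summand by the first fact, and recognise what remains as $\pi^{*}(D\omega)$; this is essentially a one-line verification. For the $s^{*}$ half I would split on whether $d+1\in I$. If $d+1\notin I$, then $s^{*}\omega=(f\circ s)\,dx_I$ and the computation of $D(s^{*}\omega)$ matches $s^{*}(D\omega)$ term by term via the second fact, with the $j=d+1$ summand of $D\omega$ killed by the vanishing rule of $s^{*}$ (since its wedge factor contains $dx_{d+1}$). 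If $d+1\in I$, then $s^{*}\omega = 0$ so $D(s^{*}\omega)=0$; on the other side every summand $\partial_j f\,dx_j\wedge dx_I$ of $D\omega$ either already contains $dx_{d+1}$ from $I$ (and is killed by $s^{*}$) or corresponds to $j=d+1$, in which case $dx_j\wedge dx_I$ vanishes by antisymmetry of $\wedge$ since $dx_{d+1}$ already appears in $dx_I$.

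The only real obstacle is bookkeeping: keeping track of which $dx_{d+1}$-containing terms vanish by the definition of $s^{*}$ and which vanish by repetition in the wedge product. Once both compatibility identities are established, the parenthetical assertion about closed forms is immediate, since $D\omega=0$ forces $D(\pi^{*}\omega)=\pi^{*}(D\omega)=0$ and $D(s^{*}\omega)=s^{*}(D\omega)=0$.
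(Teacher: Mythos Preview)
Your proposal is correct and is precisely the ``easy verification'' the paper alludes to without writing out; the paper gives no proof beyond that phrase, and your direct computation on monomial generators, using $\partial_{d+1}(f\circ\pi)=0$ and $\partial_j(f\circ s)=(\partial_j f)\circ s$ for $j\le d$, is exactly the intended argument.
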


Next, we define $K:\Omega^{q}(\mathbb{N}^{d+1}) \rightarrow \Omega^{q-1}(\mathbb{N}^{d+1})$ as follows. 
Let  
\begin{align*}
\omega= f(n_1, \cdots, n_d, t)dx_{i_1} \wedge dx_{I'}\in \Omega^{q}(\mathbb{N}^{d+1}).
\end{align*}
Here $I'=\{ i_2, \cdots, i_{q} \}$. 
If $d+1 \not\in \{ i_1 \} \cup I'$, we put
\begin{align*}
K(\omega)=0.
\end{align*} 
If $i_1=d+1$ and  $d+1 \not\in I'$, we put 
\begin{align*}
K(\omega)= \left( \sum_{k=1}^{t-1}f(n_1, \cdots, n_d, k) \right)
dx_{I'}.
\end{align*}

It is clare that $s^* \circ \pi^*$ is the identity map of $\Omega^q(\mathbb{N}^{d})$.
Conversely, $\pi^*\circ s^*$ is not the identity map of $\Omega^q(\mathbb{N}^{d+1})$. 
However, 
$\textrm{Id}-\pi^* \circ s^*$ is zero on $H^q(\mathbb{N}^{d+1})$ by the next proposition. 
Therefore, we regard 
$\pi^* \circ s^*$ and  $s^* \circ \pi^*$ as the identity maps of $H^q(\mathbb{N}^{d+1})$ and $H^q(\mathbb{N}^{d})$ respectively. 
Thus, $H^q(\mathbb{N}^{d+1})$ and $H^q(\mathbb{N}^{d})$ are isomorphic. 
We can see Theorem \ref{poincare} inductively.

\begin{prop}\label{pisdk} 
We have 
$Id-\pi^* \circ s^* = DK+KD$ on $\Omega^q(\mathbb{N}^{d+1})$
\end{prop}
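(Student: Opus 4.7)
I would prove this by reducing, via $R$-linearity of $D$, $K$, $\pi^*$, and $s^*$, to the case of a monomial $\omega = f(n_1,\ldots,n_d,t)\, dx_J$ and then splitting on whether the last index $d+1$ appears in $J$. Concretely, the plan is to verify the identity separately in the following two exhaustive cases:

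\textbf{Case 1: $d+1\notin J$.} Here $K\omega = 0$ by definition, so $DK\omega=0$, and $s^*\omega = f(n_1,\ldots,n_d,1)\,dx_J$, giving $\pi^*s^*\omega = f(n_1,\ldots,n_d,1)\,dx_J$. Computing $D\omega = \sum_{j=1}^{d+1}\partial_j f \cdot dx_j\wedge dx_J$, only the $j=d+1$ summand involves $dx_{d+1}$, so after applying $K$ only that term survives and yields $\bigl(\sum_{k=1}^{t-1}\partial_{d+1}f(\ldots,k)\bigr)dx_J$. This sum telescopes to $\bigl(f(\ldots,t)-f(\ldots,1)\bigr)dx_J = \omega - \pi^*s^*\omega$, as desired.

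\textbf{Case 2: $d+1\in J$}, say $J = \{d+1\}\cup I'$ with $d+1\notin I'$. Now $s^*\omega = 0$, so the target identity becomes $DK\omega + KD\omega = \omega$. Writing $F(n_1,\ldots,n_d,t) := \sum_{k=1}^{t-1}f(n_1,\ldots,n_d,k)$, the key observation is the telescoping identity $\partial_{d+1}F = f$. Then $DK\omega = \sum_{j=1}^{d+1}\partial_j F\cdot dx_j\wedge dx_{I'}$; the $j=d+1$ term contributes exactly $f\cdot dx_{d+1}\wedge dx_{I'} = \omega$, and the $j\le d$ terms contribute $\sum_{j\le d}\bigl(\sum_{k=1}^{t-1}\partial_j f(\ldots,k)\bigr)dx_j\wedge dx_{I'}$. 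On the other side, $D\omega = \sum_{j=1}^{d+1}\partial_j f\cdot dx_j\wedge dx_{d+1}\wedge dx_{I'}$; the $j=d+1$ term vanishes (since $dx_{d+1}\wedge dx_{d+1}=0$), and for $j\le d$ we use $dx_j\wedge dx_{d+1} = -dx_{d+1}\wedge dx_j$, so $KD\omega = -\sum_{j\le d}\bigl(\sum_{k=1}^{t-1}\partial_j f(\ldots,k)\bigr)dx_j\wedge dx_{I'}$. The two $j\le d$ sums cancel, leaving $DK\omega + KD\omega = \omega$.

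The only real content is the telescoping $\partial_{d+1}F = f$ (which replaces the fundamental theorem of calculus from the classical smooth proof in Bott--Tu) together with the sign arising from anti-commuting $dx_j$ past $dx_{d+1}$. The main thing to be careful about is the sign bookkeeping in Case 2 and making sure $K$ is applied only when $d+1$ is literally the first index of the monomial in the chosen ordering, which is automatic once we normalize each summand of $D\omega$ so that $dx_{d+1}$ appears in front.
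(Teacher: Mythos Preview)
Your proof is correct and follows essentially the same approach as the paper: reduce by linearity to monomials, split into the two cases $d+1\notin J$ and $d+1\in J$, and in each case compute $DK\omega$ and $KD\omega$ directly, using the telescoping identity $\sum_{k=1}^{t-1}\partial_{d+1}f(\ldots,k)=f(\ldots,t)-f(\ldots,1)$ and the sign from anti-commuting $dx_j$ past $dx_{d+1}$. The only cosmetic difference is that the paper restricts the index $j$ to $j\notin\{d+1\}\cup I'$ in the Case~2 sums, whereas you write $j\le d$; since $dx_j\wedge dx_{I'}=0$ for $j\in I'$, the two are equivalent.
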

\begin{proof}

For 
\begin{align*}
\omega=f(n_1, \cdots, n_d, t)dx_I \in \Omega^{q}(\mathbb{N}^{d+1}), 
\end{align*}
we see 
\begin{align*}
(Id-\pi^* \circ s^*)(\omega)=
\begin{cases}
\{ f (n_1, \cdots, n_d, t) - f (n_1, \cdots, n_d, 1)\} dx_I \;\;\;\; 
\textrm{if $d+1 \not\in I$,} \\ 
f(n_1, \cdots, n_d, t)dx_I 
\;\;\;\; 
\textrm{if $d+1 \in I$}. 
\end{cases}
\end{align*}

First, let 
\begin{align*}
\omega= f(n_1, \cdots, n_d, t)dx_I \in \Omega^q(\mathbb{N}^{d+1})
\end{align*}
with $d+1 \not\in I$. 
It is evident that $DK(\omega)=0$ by definition of $K$.
Since 
\begin{align*}
KD(\omega) &= K \left( 
\partial_{d+1} f(n_1, \cdots, n_d, t) dx_{d+1} \wedge dx_I 
+ \textrm{other tems} \right) \\ 
&= \sum_{k=1}^{t-1}
\{ f(n_1, \cdots, n_d, k+1) - f(n_1, \cdots, n_d, k) \} dx_I 
 \\
&=
\{  f(n_1, \cdots, n_d, t) - f(n_1, \cdots, n_d, 1) \} dx_I, 
\end{align*}
we have 
\begin{align*}
(DK+KD)(\omega)=\{  f(n_1, \cdots, n_d, t) - f(n_1, \cdots, n_d, 1) \} dx_I.
\end{align*}

Second, let 
\begin{align*}
\omega= g(n_1, \cdots, n_d, t)dx_{i_1} \wedge dx_{I'}
 \in \Omega^{q}(\mathbb{N}^{d+1}).
\end{align*}
Here $I'=\{ i_2, \cdots, i_{q} \}$ and suppose that $i_1=d+1$ and  $d+1 \not\in I'$. 
We have 
\begin{align*}
KD(\omega) &= K \left(\sum_{j=1}^{d+1} 
\partial_{j} g(n_1, \cdots, n_d, t) dx_j \wedge dx_{d+1}\wedge dx_{I'} \right) \\
&= -K \left(\sum_{j=1 \atop j \not\in \{ d+1 \} \cup I'}^{d+1} 
\partial_{j} g(n_1, \cdots, n_d, t) dx_{d+1} \wedge dx_{j}\wedge dx_{I'} \right) \\
&= - \sum_{j=1 \atop j \not\in \{ d+1 \} \cup I'}^{d+1}  \sum_{k=1}^{t-1}
\partial_{j} g(n_1, \cdots, n_d, k)   dx_{j}\wedge dx_{I'}
\end{align*}
and
\begin{align*}
DK(\omega) &= D\left( \sum_{k=1}^{t-1}g(n_1, \cdots, n_d, k) dx_{I'}\right) \\
&= \sum_{j=1}^{d+1}\partial_j \left( \sum_{k=1}^{t-1}g(n_1, \cdots, n_d, k) \right)
dx_j \wedge dx_{I'}
\\
&= \sum_{j=1 \atop j \not\in \{ d+1 \} \cup I'}^{d+1}  \sum_{k=1}^{t-1}
\partial_{j} g(n_1, \cdots, n_d, k)   dx_{j}\wedge dx_{I'}
+ g(n_1, \cdots, n_d, t)   dx_{d+1}\wedge dx_{I'}.
\end{align*}
Thus, 
\begin{align*}
(DK+KD)(\omega)=g(n_1, \cdots, n_d, t)   dx_{d+1}\wedge dx_{I'}.
\end{align*}
\end{proof}

\end{document}